 \numberwithin{equation}{section}
 \newtheorem{theorem}{Theorem}[section]
 \newtheorem{proposition}[theorem]{Proposition}
 \newtheorem{lemma}[theorem]{Lemma}
 \newtheorem{corollary}[theorem]{Corollary}
 \newtheorem{example}[theorem]{Example}
 \title[Locally conformal almost generalized $f$-cosymplectic manifolds]{Locally conformal almost generalized $f$-cosymplectic manifolds}
 \author[Fortun\'e Massamba, Jude Rosnick Bayeni Mitoueni]{Fortun\'e Massamba*, Jude Rosnick Bayeni Mitoueni** }
 \newcommand{\acr}{\newline\indent}
 \address{\llap{*\,}Discipline of Mathematics\acr 
 	School of Agriculture and Science\acr
 	University of KwaZulu-Natal\acr
 	Private Bag X01, Scottsville 3209\acr 
 	South Africa} 
 \email{massfort@yahoo.fr, Massamba@ukzn.ac.za} 
 \thanks{}
 \address{\llap{**\,}Universite Marien Ngouabi\acr
 	Facult\'e des Sciences et Technique Parcours Mathematiques\acr
 	BP. 69, Brazzaville, Congo}  
 \email{judebayeni@gmail.com} 
 \thanks{}  
 \subjclass[2020]{Primary 53C15; Secondary 53C25}
 \dedicatory{}
 \keywords{Almost generalized $f$-cosymplectic manifold, Locally conformal structure, Almost cosymplectic manifold.} 
\begin{document}

\begin{abstract}
This paper introduces a new class of geometric structures in almost contact metric geometry, which we call locally conformal almost generalized $f$-cosymplectic manifolds. These are almost contact metric structures $(\phi, \xi, \eta, g)$ equipped with a closed Lee form $\omega$ and a smooth function $f$ satisfying
$$
d\eta = \omega \wedge \eta, \;\; d\Phi = 2f\eta \wedge \Phi + 2\omega \wedge \Phi,
$$  
where $\Phi(\cdot, \cdot) = g(\cdot, \phi \cdot)$ is the second fundamental form. We derive integrability conditions and prove a dimensional dichotomy: in dimension $3$, $\omega$ may admit transverse components, while in higher dimensions it must be proportional to $\eta$. This rigidity, which contrasts with even-dimensional conformal symplectic geometry, is established and illustrated by explicit examples in dimensions $3$ and $5$. The framework generalizes and unifies prior results on locally conformal almost cosymplectic and almost $f$-cosymplectic structures.
\end{abstract} 
 	
% Keywords 

\maketitle
 	
\section{Introduction}
 	
 The study of locally conformal geometric structures begins with a simple yet powerful idea: global geometric conditions may be relaxed to hold only up to local conformal changes. Since Vaisman’s pioneering work on locally conformal symplectic manifolds \cite{Vai}, this perspective has deeply influenced differential geometry, extending to locally conformal K\"ahler, cosymplectic, and contact-type geometries \cite{chin, MadMass2, Mass1, Ol1}. In each case, a closed $1$-form the Lee form governs the structure, capturing the obstruction to global conformality.
 	
 In odd dimensions, almost contact metric geometry offers a natural counterpart to symplectic and Hermitian theories. Here, almost cosymplectic manifolds, defined by the closedness of both the contact form $\eta$ and the second fundamental form $\Phi$, have been thoroughly investigated \cite{golb, MadMass1, Ol2}. A more recent generalization, almost $f$-cosymplectic manifolds \cite{aym}, allows a smooth function $f$ to interpolate between cosymplectic and Kenmotsu geometries. This introduces a useful flexibility, yet a parallel locally conformal theory for such generalized $f$-cosymplectic structures has remained missing.
 	
 In this paper, we fill that gap by introducing and systematically analyzing \emph{locally conformal almost generalized $f$-cosymplectic manifolds}. Our work unifies and extends earlier studies while uncovering a striking rigidity phenomenon that distinguishes odd-dimensional geometry from its even-dimensional analogues. 
 	
 We begin by characterizing these manifolds through a closed Lee form $\omega$ satisfying the compatible differential system
 $$
 d\eta = \omega \wedge \eta, \;\; d\Phi = 2f\eta \wedge \Phi + 2\omega \wedge \Phi.
 $$
 This characterization (Theorem \ref{TheoImport1}) not only generalizes earlier results for almost cosymplectic and almost $f$-cosymplectic manifolds \cite{aym, MadMass1, Mass1, Ol1} but also leads to a natural integrability condition linking $f$ and $\omega$ (Proposition \ref{prop:integrability-f}). What emerges is a clear dimensional dichotomy: in dimension $3$, an example shows that $\omega$ need not align with $\eta$, while in dimensions $5$ and higher, $\omega$ is forced to be proportional to $\eta$, a rigidity absent in the even-dimensional symplectic setting.
 	
 The implications of this rigidity are geometric as well as topological, affecting the Reeb foliation, curvature identities, and the global structure of such manifolds. To ground the theory, we provide explicit examples in dimensions $3$ and $5$, demonstrating both the low-dimensional flexibility and the high-dimensional constraints.
 	
 The paper is organized as follows. Section \ref{Generalized} reviews almost contact metric structures and introduces almost generalized $f$-cosymplectic manifolds, including a basic constraint on $f$ (Proposition \ref{fconstraint}) based on an injectivity lemma (Lemma \ref{lem:injectivity-wedge}). Section \ref{LcGeneralized} develops the locally conformal theory, presenting the main characterization, integrability conditions, examples, and rigidity theorems. Section \ref{Conclusion} summarizes our findings and suggests directions for future research.
 	
 Throughout, all manifolds and tensor fields are smooth, and we follow the conventions of almost contact metric geometry as in \cite{bl}.

 	\section{Generalized $f$-cosymplectic manifolds}\label{Generalized}

 	Let $M$ be a $(2n + 1)$-dimensional manifold equipped with an almost contact structure $(\phi, \xi, \eta)$, that is, $\phi$ is a tensor field of type $(1, 1)$, $\xi$ is a vector field, and $\eta$ is a $1$-form satisfying \cite{bl}
 	\begin{equation}\label{EquaImport1}
 		\phi^{2} = -\mathbb{I} + \eta\otimes\xi,\;\;\eta(\xi)= 1. 
 	\end{equation}
 	This implies that $\phi\xi=0$, $\eta\circ\phi=0$, and $\mathrm{rank}\, \phi = 2n$. 
 	In this case, $(\phi, \xi, \eta,\, g)$ is called an almost contact metric structure on $M$ if $(\phi, \xi, \eta)$ is an almost contact structure on $M$ and $g$ is a Riemannian metric on $M$ such that (see \cite{bl})
 	\begin{equation}\label{EquaImport2}
 		g(\phi\,X, \phi\,Y) = g(X, Y) -  \eta(X)\eta(Y),
 	\end{equation}
 	for any vector field $X$, $Y$ on  $M$. It is easy to see the $(1,1)$-tensor field $\phi$ is skew-symmetric and so 
 	$$
 	\eta(X) =  g(\xi,X).
 	$$
 	The second fundamental form of $M$ is defined by
 	$$ 
 	\Phi(X, Y) = g(X, \phi Y),
 	$$ 
 for any vector fields $X$ and $Y$ on $M$.
 
 The almost contact manifold $(M, \phi, \xi, \eta, g)$ is said to be almost cosymplectic if the forms $\eta$ and $\Phi$ are closed, that is, $d\eta = 0$ and $d\Phi = 0$, $d$ being the operator of the exterior differentiation (see \cite{golb} for more details). If $M$ is almost cosymplectic and its almost contact structure $(\phi,\xi,\eta)$ is normal, then $M$ is called cosymplectic. By normality, we mean that the torsion tensor field $N$ given by 
 $$
N=  [\phi, \phi] + 2d\eta\otimes\xi 
 $$
vanishes, where $[\phi, \phi]$ is the Nijenhuis torsion of  $\phi$ defined by
 $$
 [\phi, \phi](X, Y ) = \phi^{2} [X, Y ] + [\phi X, \phi Y ] - \phi[\phi X, Y ] - \phi[X, \phi Y ],
 $$
 for any vector fields $X$ and $Y$ on $M$.
 
 Now, we adapt the definition of almost $f$-cosymplectic manifolds given \cite{aym} as follows. We say that $M$ is an \textit{almost generalized $f$-cosymplectic manifold} if there exists  an almost contact metric structure $(\phi, \xi, \eta, g)$ on $M$ satisfying  
 \begin{equation}
 d\eta = 0 \;\; \mbox{and} \;\; d\Phi = 2f\eta \wedge \Phi,	
 \end{equation}
 where $f$ is a smooth function on $M$. 
 
This definition extends several well-known classes of almost contact metric manifolds. 
 \begin{itemize}
 	\item[-] When $f \equiv 0$, we recover the classical almost cosymplectic condition $d\eta = 0 = d\Phi$ (see for instance \cite{Ol2} for details).
 	\item[-] When $f$ is a nonzero constant, the structure is known as \emph{almost Kenmotsu}, which appears naturally in warped product constructions and geometries with transverse K\"ahler leaves \cite{dileo}.
 	\item[-] For general $f$, the definition generalizes the ones given by Aktan \textit{et al.} in \cite{aym} in which $f$ satisfies an extra condition $df\wedge \eta=0$, interplays between the Reeb direction and the symplectic part of the structure, and the authors in \cite{kim}. 
 \end{itemize}
Geometrically, the condition $d\eta = 0$ ensures that the $1$-form $\eta$ defines a codimension $1$ foliation on $M$, whose leaves inherit an almost K\"ahler structure via $(\phi, g)$. The last condition, $d\Phi = 2f\eta \wedge \Phi$, implies that the second fundamental form $\Phi$ is not closed, but its exterior derivative is entirely controlled by the function $f$ and the contact form $\eta$. In particular, on each leaf of the foliation $\ker\eta$, the $2$-form $\Phi$ restricts to a symplectic form, but its derivative in the transverse direction is proportional to $f$.
 
 The function $f$ thus serves as a modulating parameter that interpolates between cosymplectic geometry ($f=0$) and Kenmotsu geometry ($f$ constant). In the general case where $f$ is non constant, the structure exhibits a richer local behavior, allowing the geometry to vary from point to point. This flexibility makes almost generalized $f$-cosymplectic manifolds a natural setting for studying geometric structures with non-uniform Lee-type behavior, conformal changes, and foliations with varying transverse geometry.
 
 It is worth noting that when the almost contact structure is normal, an almost generalized $f$-cosymplectic manifold becomes a generalized $f$-cosymplectic manifold in the strict sense. In such cases, the leaves of $\ker\eta$ are K\"ahler manifolds, and the Reeb vector field $\xi$ generates a geodesic flow orthogonal to the leaves.

 A direct consequence of the structure equations is the constraint on the function $f$. Before stating that, we present the following standard Lefschetz-type injectivity result as follows.
 \begin{lemma}\label{lem:injectivity-wedge}
 	Let $(M, \phi, \xi, \eta, g)$ be a $(2n+1)$-dimensional almost contact manifold, and let $\Phi$ be its second fundamental form. Consider the linear map 
 	$$
 	L : \Gamma((\ker\eta)^*) \longrightarrow \Gamma(\Lambda^{2n-1} (\ker\eta)^*), \qquad \alpha \longmapsto \alpha \wedge \Phi^{n-1}.
 	$$
 	Then $L$ is injective, that is, if $\alpha \wedge \Phi^{n-1} = 0$ on $\ker\eta$, then $\alpha = 0$. 
 \end{lemma}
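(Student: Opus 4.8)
The plan is to reduce the statement to pointwise linear algebra and then recognize it as the degree-one hard Lefschetz isomorphism. Since $M$ is $(2n+1)$-dimensional and $\eta$ is nowhere zero, the distribution $\ker\eta$ is a rank-$2n$ subbundle, and $\Phi$ restricts to a $2$-form on each fibre $(\ker\eta)_p$. The condition $\alpha \wedge \Phi^{n-1} = 0$ is fibrewise, so injectivity of $L$ on sections follows from injectivity of the induced linear map on each fibre. Thus it suffices to prove the following: if $(V, \Omega)$ is a $2n$-dimensional vector space carrying a non-degenerate $2$-form $\Omega$, then the map $\alpha \mapsto \alpha \wedge \Omega^{n-1}$ is injective on $V^{*}$.

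First I would establish that $\Phi|_{\ker\eta}$ is non-degenerate, so that $(\ker\eta)_p$ is genuinely symplectic. For $X \in \ker\eta$ with $X \neq 0$, the vector $\phi X$ again lies in $\ker\eta$ because $\eta\circ\phi = 0$, and using \eqref{EquaImport1} and \eqref{EquaImport2} one computes
$$
\Phi(X, \phi X) = g(X, \phi^{2} X) = g\bigl(X, -X + \eta(X)\xi\bigr) = -g(X,X) \neq 0 .
$$
Hence the contraction map $V \to V^{*}$, $A \mapsto \iota_{A} \Omega$ (with $\Omega = \Phi$), is an isomorphism, and every $\alpha \in (\ker\eta)^{*}$ can be written uniquely as $\alpha = \iota_{A}\Omega$ for some $A \in \ker\eta$.

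The core step is an interior-product identity. Because $\Omega$ has even degree it is central in the exterior algebra, and $\iota_{A}$ acts as an antiderivation, so induction gives
$$
\iota_{A}(\Omega^{n}) = n\,(\iota_{A}\Omega)\wedge \Omega^{n-1} = n\,\alpha \wedge \Omega^{n-1}.
$$
Non-degeneracy of $\Omega$ forces $\Omega^{n}$ to be a non-zero top-degree form on the $2n$-dimensional space $V$ (in a symplectic basis $\Omega^{n} = n!\, e^{1}\wedge\cdots\wedge e^{2n}$). Now suppose $\alpha \wedge \Omega^{n-1} = 0$; then $\iota_{A}(\Omega^{n}) = 0$, and since contracting a non-zero volume form with $A$ vanishes only when $A = 0$, we obtain $A = 0$, whence $\alpha = \iota_{A}\Omega = 0$. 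This proves injectivity.

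Since $\dim V^{*} = 2n = \dim \Lambda^{2n-1} V^{*}$, the injectivity of $L$ in fact upgrades to an isomorphism, which is precisely the degree-one hard Lefschetz statement for the symplectic form $\Phi$ on $\ker\eta$. I expect the only genuinely delicate points to be the sign bookkeeping in the antiderivation identity and the non-degeneracy computation; once $\Phi$ is known to be symplectic on $\ker\eta$, the interior-product argument sidesteps any appeal to Darboux coordinates and renders the conclusion immediate.
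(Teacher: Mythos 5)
Your proposal is correct and takes essentially the same approach as the paper: both reduce to pointwise linear algebra on the symplectic vector space $(\ker\eta,\,\Omega:=\Phi|_{\ker\eta})$, write $\alpha = \iota_{X}\Omega$ using non-degeneracy, and exploit the contraction identity $\iota_{X}(\Omega^{n}) = n\,(\iota_{X}\Omega)\wedge\Omega^{n-1}$ together with $\Omega^{n}\neq 0$ to conclude injectivity. The only differences are minor: you explicitly verify non-degeneracy via $\Phi(X,\phi X) = -g(X,X)$ (a detail the paper takes for granted), while the paper additionally records a supplementary symplectic-basis computation that you deliberately omit.
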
 
 \begin{proof}
 	Let $\mathcal{V} = \ker\eta $, a $2n$-dimensional symplectic vector space with symplectic form $\Omega := \Phi_{|_{\mathcal{V}}}$. The problem reduces to showing that the map
 	$$
 	L : \mathcal{V}^{*} \longrightarrow \Lambda^{2n-1} \mathcal{V}^{*}, \;\;\; \alpha \longmapsto \alpha \wedge \Omega^{n-1}
 	$$
 	is injective. Here $\mathcal{V}^{*}$ is the dual of the vector space $\mathcal{V}$. For any $X \in\Gamma(\mathcal{V})$, contraction with the top form $\Omega^{n}$ gives
 	$$
 	i_X(\Omega^{n}) = n \,(i_X \Omega)\wedge \Omega^{n-1}.
 	$$
 	Since $\Omega$ is nondegenerate, the map
 	$$
 	\mathcal{V} \longrightarrow \mathcal{V}^*, \;\; X \longmapsto i_X \Omega,
 	$$
 	is an isomorphism. Also, since $\Omega^{n}\neq 0$, the map
 	$$
 	\mathcal{V} \longrightarrow \Lambda^{2n-1} \mathcal{V}^*, \;\; X \longmapsto i_{X} \Omega^{n}
 	$$
 	is an isomorphism. Combining these, for $\alpha = i_{X} \Omega$ we have
 	$$
 	L(\alpha) = \alpha \wedge \Omega^{n-1} = \frac{1}{n} i_{X} \Omega^{n}.
 	$$
 	Hence $L$ is a composition of isomorphisms, up to a nonzero scalar $1/n$, and in particular injective. Therefore $\alpha \wedge \Omega^{n-1} = 0$ implies that $\alpha = 0$.
 	This can be seen explicitly, as follows. Choose a symplectic basis $\{e_{1},\;\dots,\;e_{n},\;f_{1},\;\dots,\;f_{n}\}$ of $\mathcal{V}$ such that
 	$$
 	\Omega(e_{i}, f_{j}) = \delta_{ij},\;\; \Omega(e_{i}, e_{j}) = \Omega(f_{i}, f_{j}) = 0,
 	$$
 	and let $\{e_{1}^{*},\; \dots,\; e_{n}^{*},\; f_{1}^{*},\; \dots,\; f_{n}^{*}\}$ be the dual basis of $\mathcal{V}^{*}$. Write
 	$ 
 	\alpha = \sum_{i=1}^{n} a_{i} e_{i}^{*} + b_{i} f_{i}^{*}.
 	$ 
 	A straightforward combinatorial computation gives
 	$$
 	e_{i}^{*}\wedge \Omega^{n-1} = (n-1)!\;\nu_{(\widehat{f_{i}})},\;\;
 	f_{i}^{*}\wedge \Omega^{n-1} = (n-1)!\;\nu_{(\widehat{e_{i}})},
 	$$
 	where $\nu:= e_{1}^{*}\wedge f_{1}^{*}\wedge \cdots \wedge e_{n}^{*}\wedge f_{n}^{*}$ is the volume form on $\mathcal{V}$, and $\nu_{(\widehat{e_i})}$ (respectively $\nu_{(\widehat{f_i})}$) is obtained from $\nu$ by omitting $e_i^*$ (respectively $f_i^*$). Thus
 	$$
 	\alpha\wedge\Omega^{n-1} = (n-1)! \sum_{i=1}^{n} \left\{ a_{i} \nu_{(\widehat{f_{i}})} + b_{i} \nu_{(\widehat{e_{i}})}\right\}.
 	$$	
 	Since the $(2n-1)$-forms $\{\nu_{(\widehat{e_i})}, \;\nu_{(\widehat{f_i})}: i=1,\;\dots,\; n\}$ are linearly independent, the equality $\alpha\wedge\Omega^{n-1} = 0$ forces all coefficients $a_i$ and $b_i$ to vanish. Hence $\alpha = 0$. Since $p\in M$ was arbitrary, this argument holds at all points, proving that $L$ is injective.
 \end{proof}
 The following is the constraint conditions on the smooth function $f$. 
 \begin{proposition}\label{fconstraint}
 Let $(M, \phi, \xi, \eta, g)$ be a $(2n+1)$-dimensional almost generalized $f$-cosymplectic manifold. Then the function $f$ satisfies
 $$
 df \wedge \eta \wedge \Phi = 0.
 $$
Furthermore,
 	\begin{enumerate}
 		\item If $n = 1$ (i.e., $\dim M = 3$), the condition imposes no restriction on $f$.
 		\item If $n \geq 2$ (i.e., $\dim M \geq 5$), then $df$ is proportional to $\eta$, that is, there exists a smooth function $\rho$ on $M$ such that
 		$ 
 		df = \rho \eta.
 		$ 
 		In particular, $f$ is constant on the leaves of the foliation $\ker \eta$.
 	\end{enumerate}
 \end{proposition}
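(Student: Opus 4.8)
The plan is to extract the first identity purely formally from the integrability of the structure equation, and then to feed it into the Lefschetz-type injectivity Lemma \ref{lem:injectivity-wedge} to obtain the high-dimensional rigidity. First I would differentiate the defining equation $d\Phi = 2f\eta\wedge\Phi$. Applying $d$ and using $d^2 = 0$ gives
$$
0 = 2\,df\wedge\eta\wedge\Phi + 2f\,d(\eta\wedge\Phi).
$$
Since $d\eta = 0$ and $\eta\wedge d\Phi = 2f\,\eta\wedge\eta\wedge\Phi = 0$, the remaining term $d(\eta\wedge\Phi) = d\eta\wedge\Phi - \eta\wedge d\Phi$ vanishes identically. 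Hence $df\wedge\eta\wedge\Phi = 0$, which is the first assertion.

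For the dichotomy, the case $n = 1$ is immediate: then $df\wedge\eta\wedge\Phi$ is a $4$-form on the $3$-manifold $M$, so it vanishes for degree reasons and imposes no constraint on $f$. The substantive case is $n \geq 2$. Here I would first wedge the identity with $\Phi^{n-2}$ to raise it to the degree where the injectivity lemma applies, obtaining
$$
df\wedge\eta\wedge\Phi^{n-1} = 0.
$$
Next, decompose $df$ along the Reeb direction and its complement by setting $\rho := \xi(f)$, $\beta := df - \rho\,\eta$, so that $\beta(\xi) = 0$, i.e. $\beta$ is horizontal. Contracting the displayed identity with $\xi$ and using $i_\xi\eta = 1$ together with $i_\xi\Phi = 0$ (a consequence of $\eta\circ\phi = 0$, whence $i_\xi\Phi^{n-1} = 0$), the Reeb terms cancel and one is left with $\beta\wedge\Phi^{n-1} = 0$.

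Since $\beta$ and $\Phi^{n-1}$ are both horizontal, restricting to $\ker\eta$ yields $\beta|_{\ker\eta}\wedge\Omega^{n-1} = 0$, where $\Omega = \Phi|_{\ker\eta}$ is the fibrewise symplectic form. Lemma \ref{lem:injectivity-wedge} then forces $\beta|_{\ker\eta} = 0$, and because $\beta$ is horizontal this gives $\beta = 0$. Therefore $df = \rho\,\eta$, and for any $X \in \ker\eta$ we get $X(f) = \rho\,\eta(X) = 0$, so $f$ is constant on the leaves of $\ker\eta$, as claimed.

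The step I expect to be the main obstacle is the passage from the naturally occurring degree-$4$ relation $df\wedge\eta\wedge\Phi = 0$ to a relation in the exact degree demanded by the injectivity lemma. The device is to wedge with the appropriate power $\Phi^{n-2}$ of the horizontal symplectic form and then strip off the $\eta$-factor by contraction with $\xi$; one must verify carefully that this contraction annihilates precisely the Reeb component $\rho\,\eta$ and isolates the horizontal part $\beta$, which is exactly where the almost contact identities $i_\xi\eta = 1$ and $i_\xi\Phi = 0$ enter. Once $\beta\wedge\Phi^{n-1} = 0$ is in hand, the conclusion is a direct invocation of Lemma \ref{lem:injectivity-wedge}.
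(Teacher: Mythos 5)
Your proposal is correct and takes essentially the same route as the paper: differentiate the structure equation to obtain $df\wedge\eta\wedge\Phi=0$, dispose of $n=1$ by degree count, and for $n\geq 2$ decompose $df=\rho\,\eta+\beta$ with $\beta(\xi)=0$ and kill $\beta$ via Lemma~\ref{lem:injectivity-wedge}. The only differences are cosmetic and, if anything, in your favor: you wedge with $\Phi^{n-2}$ and contract with $\xi$ so that the lemma applies in its literal degree $\alpha\mapsto\alpha\wedge\Phi^{n-1}$, whereas the paper strips off $\eta$ by its nowhere-vanishing and cites the lemma for the map $\alpha\mapsto\alpha\wedge\Phi$, which matches the lemma's statement only after the same implicit wedging with $\Phi^{n-2}$ that you carry out explicitly.
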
 
 \begin{proof}
 From the defining conditions $d\eta = 0$ and $d\Phi = 2f \eta \wedge \Phi$, we take the exterior derivative of the latter:
 $$
 d^2\Phi = 2\, df \wedge \eta \wedge \Phi + 2f\, d(\eta \wedge \Phi).
 $$
 Since $d^2\Phi = 0$ and $d\eta = 0$, we compute
\begin{equation}
d(\eta \wedge \Phi)  = d\eta \wedge \Phi - \eta \wedge d\Phi =  - \eta \wedge (2f \eta \wedge \Phi)  = 0.\nonumber
\end{equation} 
Hence,
\begin{equation}\label{TripleDiffer1}
 df \wedge \eta \wedge \Phi = 0.
\end{equation} 
When $n = 1$, the manifold is $3$-dimensional. The $3$-form $\eta \wedge \Phi$ is nowhere zero (in fact, it is a volume form), but equation (\ref{TripleDiffer1}) is a $4$-form on a $3$-manifold, so it holds automatically. Thus no constraint on $f$ arises.
 	
Assume now that $n \geq 2$ (so $\dim M \geq 5$). Decompose $df$ as
$$
df = \rho \eta + \beta,
$$
where $\rho \in C^\infty(M)$ and $\beta$ is a $1$-form such that $\beta(\xi) = 0$, i.e., $\beta$ belongs to the annihilator of $\xi$, which is the cotangent space of the leaves of $\ker \eta$. Substituting into (\ref{TripleDiffer1}) gives
\begin{align}
0 & = (a \eta + \beta) \wedge \eta \wedge \Phi\nonumber\\
& = \beta \wedge \eta \wedge \Phi \nonumber\\
&= - \eta \wedge (\beta \wedge \Phi).\nonumber
\end{align}
Since $\eta$ is nowhere vanishing, it follows that $\beta \wedge \Phi = 0$ when restricted to vectors in $\ker \eta$. Now consider the restriction of $\beta$ to the distribution $\mathcal{V} = \ker \eta$. On each leaf, $\Phi|_\mathcal{V}$ is a nondegenerate $2$-form. By Lemma~\ref{lem:injectivity-wedge}, the map $\alpha \longmapsto \alpha \wedge \Phi$ is injective on sections of $\mathcal{V}^*$. Therefore, $\beta = 0$ on $\mathcal{V}$, and since $\beta(\xi)=0$, we have $\beta = 0$ everywhere. Hence $df = \rho \eta$, so $df$ is proportional to $\eta$.
 
Finally, if $X$ is any vector field tangent to $\ker \eta$, then
$  
X(f) = df(X) = \rho \eta(X) = 0,
$ 
which shows that $f$ is constant on the leaves of the foliation.
 \end{proof} 
In \cite{aym}, the authors study almost $f$-cosymplectic manifolds under the extra hypothesis $df\wedge\eta=0$. Proposition \ref{fconstraint} shows that in dimensions $5$ and higher this condition is automatically satisfied, while in dimension $3$ it is an independent constraint. Consequently, the results of \cite{aym} apply without change in dimensions $\ge 5$, but in dimension $3$ they require the additional assumption $df\wedge\eta=0$. Our definition of almost generalized $f$-cosymplectic manifolds (without that extra condition) is therefore strictly more general in dimension $3$, while in higher dimensions the two definitions coincide.

 \section{Locally conformal almost generalized $f$-cosymplectic manifolds}\label{LcGeneralized}
 	
 Let  $(M,\phi,\xi,\eta,g) $ be an almost contact metric manifold. Such a manifold  is said \textit{locally conformal (l.c.) almost generalized $f$-cosymplectic} if there exists an open covering $\{U_{t}\}_{t \in I}$ of $M$ and smooth functions $\sigma_{t} : U_{t} \longrightarrow \mathbb{R}$ such that, on each $U_{t}$, the almost contact metric structure $ (\phi_{t},\xi_{t},\eta_{t},g_{t})$ 
 $$
 \phi_{t} = \phi ,\;\; \xi_{t} = \exp(\sigma_{t})\xi ,\;\; \eta_{t} = \exp(- \sigma_{t})\eta ,\;\; g_{t} = \exp(- 2\sigma_{t})g, 
 $$ 
 is almost generalized $f\exp(\sigma_{t}) $-cosymplectic.
 
 If the structure $(\phi_{t},\xi_{t},\eta_{t},g_{t})$ is almost generalized $f\exp(\sigma_{t})$-cosymplectic, then we have 
 \begin{equation} \label{fExpCosym1}
 d\eta_{t}  = 0\:\;\mbox{and}\;\; d\Phi_{t} = 2 f \exp(\sigma_{t})\eta_{t}\wedge\Phi_{t}.
 \end{equation} 
 Denote the local $f$-function on $U_{t}$ by
 $$
 f_{t} := f\exp(\sigma_{t}),
 $$ 
 so the requirement in our definition is that, on each $U_{t}$, the structure $(\phi_{t}, \xi_{t}, \eta_{t}, g_{t})$ satisfies the almost generalized $f_{t}$-cosymplectic equations.
 
 On an intersection $U_{t}\cap U_{s}$, we have the direct relations
 \begin{align}
 \xi_{s} & = \exp(\sigma_{s}-\sigma_{t}) \xi_{t},  \;\;
 \eta_{s} = \exp(- (\sigma_{s}-\sigma_{t}))\eta_{t},\nonumber\\ 
 g_{s} & = \exp(- 2(\sigma_{s}-\sigma_{t}))g_{t},\;\;
 \Phi_{s}   = \exp(- 2(\sigma_{s}-\sigma_{t})) \Phi_{t}.\nonumber	
\end{align}	
Hence the $t$- and $s$-structures are conformally equivalent on the  intersection with conformal factor $\exp(- (\sigma_{s}-\sigma_{t}))$. 	

Write the $t$-equation on $U_{t}$, the first relation of (\ref{fExpCosym1}), gives
\begin{align}
0 &= d\eta_{t} = d(\exp(- \sigma_{t})\eta)\nonumber\\
& = -\exp(- \sigma_{t}) d	\sigma_{t}\wedge \eta +\exp(- \sigma_{t})d\eta, \nonumber
\end{align} 
that is, 
\begin{equation}\label{LcdEta1}
d\eta =  d\sigma_{t}\wedge \eta.
\end{equation}
The second relation of (\ref{fExpCosym1}), we have
\begin{align}
2 f \exp(-2\sigma_{t})\eta\wedge\Phi & =  2 f \exp(\sigma_{t})\eta_{t}\wedge\Phi_{t}\nonumber\\
&= d( \exp(-2\sigma_{t})\Phi)\nonumber\\
&= \exp(-2\sigma_{t}) \{ - 2 d\sigma_{t}\wedge  \Phi +  d\Phi  \}.\nonumber
\end{align}
That is, 
\begin{equation} \label{LcdPhi1}
 d\Phi = 2 f_{t} \exp(- \sigma_{t}) \eta\wedge\Phi +  2 d\sigma_{t}\wedge\Phi. 
\end{equation}  
The same calculations for $U_{s}$ gives
\begin{equation}\label{LcdEtaPHi1} 
d\eta =  d\sigma_{s}\wedge \eta\;\;\mbox{and}\;\; d\Phi = 2 f_{s} \exp(- \sigma_{s}) \eta\wedge\Phi +  2 d\sigma_{s}\wedge\Phi. 	
\end{equation}
Subtracting (\ref{LcdEta1}) for $t$ and $s$ yields 
\begin{equation}\label{SigmaST1}
d(\sigma_{t} -\sigma_{s})\wedge\eta  = 0.	
\end{equation}
Similarly, subtracting (\ref{LcdPhi1}) gives
\begin{equation}\label{SigmaST2}
d(\sigma_{t} -\sigma_{s})\wedge\Phi = \{  f_{s} \exp(- \sigma_{s}) - f_{t} \exp(- \sigma_{t})\}\eta\wedge\Phi.	
\end{equation} 
From (\ref{SigmaST1}) there exists a function $h_{ts}$ such that
\begin{equation}
d \sigma_{t} - d \sigma_{s} = h_{ts}\eta,\nonumber	
\end{equation}
which implies that 
\begin{equation}
h_{ts} = \xi(\sigma_{t} - \sigma_{s}).\nonumber
\end{equation}
Substituting into (\ref{SigmaST2}) and using the non-degeneracy of $\Phi$, we obtain one gets the scalar relation
\begin{equation}
\xi(\sigma_{t} - \sigma_{s}) = f_{s} \exp(- \sigma_{s}) - f_{t} \exp(- \sigma_{t}). \nonumber
\end{equation} 
But by construction, the local functions $f_{t}$ define a globally well-defined function $f$ on $M$ via $f_{t} = f\exp(\sigma_{t})$, so
$$
f_{s} \exp(- \sigma_{s})=f_{t} \exp(- \sigma_{t}) = f,
$$
hence 
$$
\xi(\sigma_{t} - \sigma_{s}) = 0.
$$
Therefore, $d \sigma_{t} = d \sigma_{s}$ on $U_{t}\cap U_{s}$, so the local $1$-form $d\sigma_{t}$ glue togther to a globally defined closed $1$-form $\omega$ on $M$ so that $\omega|_{U_{t}} = d \sigma_{t}$. Conversely, assume there exists a globally defined closed $1$-form $\omega$ on $M$ such that  
\begin{equation} \label{DetaDPhi1}
d\eta =  \omega\wedge \eta\;\;\mbox{and}\;\; d\Phi = 2 f  \eta\wedge\Phi +  2 \omega\wedge\Phi.
\end{equation}
By the classical Poincarr\'e  Lemma, there is an open cover $\lbrace U_{t}\rbrace_{t\in I} $ of $M$ and a family  of $C^{\infty}(M)$ functions $ \sigma_{t} : U_{t} \longrightarrow \mathbb{R}$ such that $\omega  = d\sigma_{t}$ on $U_{t}$. By (\ref{DetaDPhi1}), one has 
$$
d\eta =  d\sigma_{t}\wedge \eta\;\;\mbox{and}\;\; d\Phi = 2 f  \eta\wedge\Phi +  2 d\sigma_{t}\wedge\Phi.
$$
Multiplying the first equation by $\exp(- \sigma_{t})$ and second equation by $\exp(- 2\sigma_{t})$, we obtain
$$
d(\exp(- \sigma_{t})\eta)=0\;\;\mbox{and}\;\;d\Phi_{t} = 2 f \exp(\sigma_{t})\eta_{t}\wedge\Phi_{t}.
$$
Hence, escaled the structure $(\phi_{t},\xi_{t},\eta_{t}, g_{t})$ is almost $f\exp(\sigma_{t})$-cosymplectic on $U_{t}$, so $M$ is locally conformal almost generalized $f$-cosymplectic. 

Therefore, we have the following result.
 \begin{theorem} \label{TheoImport1}
 An almost contact metric manifold $M$ is an l.c. almost generalized $f$-cosymplectic if and only if there exists a globally defined closed $1$-form $\omega$ on $M$ such that   
\begin{equation} \label{CharacterRela}
d\eta = \omega\wedge \eta\;\;\mbox{and}\;\;  d\Phi  = 2 f \eta\wedge\Phi  + 2 \omega\wedge  \Phi.
\end{equation} 
 \end{theorem}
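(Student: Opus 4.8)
The plan is to prove the biconditional by establishing both implications, essentially by assembling the local computations that precede the theorem statement into a clean two-directional argument.

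For the forward direction, I would begin from the definition: on each $U_t$, the rescaled structure $(\phi_t, \xi_t, \eta_t, g_t)$ is almost generalized $f\exp(\sigma_t)$-cosymplectic, so it satisfies the structure equations \eqref{fExpCosym1}. Substituting the conformal rescalings $\eta_t = \exp(-\sigma_t)\eta$ and $\Phi_t = \exp(-2\sigma_t)\Phi$ and expanding the exterior derivatives yields the local relations \eqref{LcdEta1} and \eqref{LcdPhi1}, which express $d\eta$ and $d\Phi$ in terms of $d\sigma_t$. The crucial step is to show that the locally defined $1$-forms $d\sigma_t$ agree on overlaps and therefore glue to a single global $1$-form $\omega$. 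To this end I would compare the $t$- and $s$-versions of the structure equations on $U_t \cap U_s$: subtracting the two $d\eta$ relations gives \eqref{SigmaST1}, namely $d(\sigma_t - \sigma_s) \wedge \eta = 0$, so $d\sigma_t - d\sigma_s = h_{ts}\eta$ for some function $h_{ts} = \xi(\sigma_t - \sigma_s)$; subtracting the two $d\Phi$ relations gives \eqref{SigmaST2}. Feeding the first into the second and invoking the nondegeneracy of $\Phi$ on $\ker\eta$ produces the scalar identity $\xi(\sigma_t - \sigma_s) = f_s\exp(-\sigma_s) - f_t\exp(-\sigma_t)$. The key observation is that the compatibility built into the definition, $f_t = f\exp(\sigma_t)$, forces $f_s\exp(-\sigma_s) = f_t\exp(-\sigma_t) = f$ to be the single global function $f$, whence $h_{ts} = 0$ and $d\sigma_t = d\sigma_s$ on every overlap. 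The forms therefore glue to a closed global $1$-form $\omega$ with $\omega|_{U_t} = d\sigma_t$, and \eqref{LcdEta1}, \eqref{LcdPhi1} become precisely \eqref{CharacterRela}.

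For the converse, I would assume a global closed $1$-form $\omega$ satisfying \eqref{CharacterRela}. By the Poincar\'e Lemma, closedness of $\omega$ supplies an open cover $\{U_t\}$ and primitives $\sigma_t$ with $\omega = d\sigma_t$ on $U_t$. Substituting $\omega = d\sigma_t$ into \eqref{CharacterRela} and then multiplying the $\eta$-equation by $\exp(-\sigma_t)$ and the $\Phi$-equation by $\exp(-2\sigma_t)$ reverses the earlier expansions: the first collapses to $d(\exp(-\sigma_t)\eta) = 0$, i.e. $d\eta_t = 0$, and the second reorganizes into $d\Phi_t = 2f\exp(\sigma_t)\,\eta_t \wedge \Phi_t$. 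These are exactly the almost generalized $f\exp(\sigma_t)$-cosymplectic equations for the rescaled structure, so $M$ is l.c. almost generalized $f$-cosymplectic, completing the biconditional.

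I expect the main obstacle to be the gluing argument in the forward direction, specifically the step where one passes from the two-form identity \eqref{SigmaST2} to the scalar relation for $\xi(\sigma_t - \sigma_s)$. This requires care because $\Phi$ is degenerate on all of $TM$ (it annihilates $\xi$), so the cancellation of the $\eta \wedge \Phi$ terms and the deduction that the coefficient of $\eta$ vanishes must be justified by restricting to $\ker\eta$, where $\Phi$ is genuinely nondegenerate, rather than by a naive application of nondegeneracy on the full tangent space. The clean resolution is the recognition that the global compatibility $f_t = f\exp(\sigma_t)$ makes the right-hand side of the scalar relation vanish identically, which is what ultimately forces the primitives to have a common differential; I would emphasize this point as the conceptual heart of the proof.
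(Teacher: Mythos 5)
Your proposal is correct and follows essentially the same route as the paper: the same local expansions yielding \eqref{LcdEta1}--\eqref{LcdPhi1}, the same overlap comparison producing \eqref{SigmaST1}--\eqref{SigmaST2} and the scalar relation killed by the global compatibility $f_t = f\exp(\sigma_t)$, and the same Poincar\'e-Lemma argument for the converse. Your added care in justifying the passage from \eqref{SigmaST2} to the scalar identity via nondegeneracy of $\Phi$ on $\ker\eta$ (equivalently, that $\eta\wedge\Phi$ is nowhere zero) is a welcome refinement of the paper's terse appeal to ``the non-degeneracy of $\Phi$,'' but it is not a different proof.
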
 
 We first investigate the constraints that may be attached the smooth function $f$ defined in the case of the l.c. almost generalized $f$-cosymplectic structure. 
 \begin{proposition}\label{prop:integrability-f}
 	Let $(M,\phi,\xi,\eta,g)$ be an almost contact metric manifold equipped with a closed $1$-form $\omega$ and a smooth function $f$ satisfying
 	$$
 	d\eta = \omega \wedge \eta \;\;\mbox{and}\;\; d\Phi = 2f\,\eta \wedge \Phi + 2\omega \wedge \Phi.
 	$$
 	Then the following integrability condition holds:
 	\begin{equation}\label{EquaFunctionF1}
 	(df + f\,\omega) \wedge \eta \wedge \Phi = 0. 	 
 	\end{equation} 	 
 	Consequently:
 	\begin{enumerate}
 		\item If $\dim M = 3$ (i.e., $n=1$), then $\eta\wedge\Phi$ is a volume form and (\ref{EquaFunctionF1}) forces
 		$ 
 		df = - f\,\omega.
 		$ 
 		\item If $\dim M \geq 5$ (i.e., $n\geq 2$), then the $1$-form $df+f\,\omega$ is proportional to $\eta$, i.e., there exists a smooth function $\lambda$ such that 
 		$ 
 		df + f\,\omega = \lambda\,\eta.
 		$ 
 	\end{enumerate}
 \end{proposition}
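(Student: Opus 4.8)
The plan is to mirror the proof of Proposition \ref{fconstraint}, now carrying the extra Lee form $\omega$ through the computation and using that $\omega$ is closed. Starting from $d\Phi = 2f\eta\wedge\Phi + 2\omega\wedge\Phi$ and imposing $d^2\Phi = 0$, I would write
$$
0 = d^2\Phi = 2\,d(f\eta\wedge\Phi) + 2\,d(\omega\wedge\Phi),
$$
so the whole argument reduces to evaluating these two exterior derivatives via the structure equations.

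For the first term I would expand $d(f\eta\wedge\Phi) = df\wedge\eta\wedge\Phi + f\,d(\eta\wedge\Phi)$ and compute $d(\eta\wedge\Phi) = d\eta\wedge\Phi - \eta\wedge d\Phi$. Substituting $d\eta = \omega\wedge\eta$ and $d\Phi = 2f\eta\wedge\Phi + 2\omega\wedge\Phi$, and using $\eta\wedge\eta = 0$ together with $\eta\wedge\omega = -\omega\wedge\eta$, the surviving terms collapse to $d(\eta\wedge\Phi) = 3\,\omega\wedge\eta\wedge\Phi$. For the second term, closedness of $\omega$ gives $d(\omega\wedge\Phi) = -\omega\wedge d\Phi = -2f\,\omega\wedge\eta\wedge\Phi$, the $\omega\wedge\omega$ contribution dropping out. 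Collecting coefficients, the $f\,\omega\wedge\eta\wedge\Phi$ terms combine as $6f - 4f = 2f$, yielding
$$
0 = 2\,df\wedge\eta\wedge\Phi + 2f\,\omega\wedge\eta\wedge\Phi = 2\,(df + f\omega)\wedge\eta\wedge\Phi,
$$
which is exactly (\ref{EquaFunctionF1}). This part is routine; the only care needed is the sign bookkeeping in the wedge products.

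For the dichotomy when $n \geq 2$ I would decompose $df + f\omega = \lambda\eta + \beta$ with $\beta(\xi) = 0$, substitute into (\ref{EquaFunctionF1}), and use $\eta\wedge\eta = 0$ to reduce to $\beta\wedge\eta\wedge\Phi = 0$, i.e.\ $\eta\wedge(\beta\wedge\Phi) = 0$. Restricting to $\ker\eta$ then forces $\beta\wedge\Phi = 0$ on the symplectic distribution. The one genuine subtlety is that Lemma \ref{lem:injectivity-wedge} is phrased for $\alpha\mapsto\alpha\wedge\Phi^{n-1}$, whereas here only $\beta\wedge\Phi$ appears; I would bridge the gap by wedging $\beta\wedge\Phi = 0$ with $\Phi^{n-2}$ (legitimate once $n\geq 2$) to obtain $\beta\wedge\Phi^{n-1} = 0$, whence the Lemma gives $\beta = 0$ on $\ker\eta$, and with $\beta(\xi) = 0$ one concludes $\beta = 0$, i.e.\ $df + f\omega = \lambda\eta$.

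The case $n = 1$ is the one I would scrutinize most carefully, and it is the main obstacle. In dimension three $\eta\wedge\Phi$ is the top-degree volume form, so $(df + f\omega)\wedge\eta\wedge\Phi$ is a $4$-form on a $3$-manifold and therefore vanishes identically; consequently (\ref{EquaFunctionF1}) is automatic and, on its own, imposes no pointwise constraint on $df + f\omega$. This parallels the $n=1$ conclusion of Proposition \ref{fconstraint}. Hence to reach the stated identity $df = -f\omega$ I would need an input beyond (\ref{EquaFunctionF1})---for instance a normalization of $\omega$ or an additional structural hypothesis proper to dimension three; absent such input, the honest conclusion is that the transverse part of $df + f\omega$ remains free, which is precisely the low-dimensional flexibility the paper emphasizes elsewhere.
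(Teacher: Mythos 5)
Your derivation of (\ref{EquaFunctionF1}) and your treatment of case (2) are correct and essentially reproduce the paper's own proof: the paper likewise expands $d^2\Phi=0$, uses $d\omega=0$ and $\omega\wedge\omega=0$, computes $d(\eta\wedge\Phi)=3\,\omega\wedge\eta\wedge\Phi$, collects the same $3f-2f$ coefficient, and then for $n\geq 2$ decomposes $df+f\omega=\lambda\eta+\beta$ with $\beta(\xi)=0$ and invokes Lemma~\ref{lem:injectivity-wedge}. One small point where you are more careful than the paper: the paper cites the Lemma as if it asserted injectivity of $\alpha\mapsto\alpha\wedge\Phi$, whereas it is stated for $\alpha\mapsto\alpha\wedge\Phi^{n-1}$; your bridge (wedge $\beta\wedge\Phi=0$ with $\Phi^{n-2}$ to get $\beta\wedge\Phi^{n-1}=0$, then apply the Lemma) is exactly the missing step, and it is legitimate for $n\geq 2$.

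On case (1) you diverge from the paper, and you are right to do so. The paper's proof reads: ``If $\dim M=3$, the $3$-form $\eta\wedge\Phi$ is a volume form, so (\ref{EquaFunctionF1}) forces $df+f\omega=0$.'' This inference is invalid for precisely the reason you state: $(df+f\omega)\wedge\eta\wedge\Phi$ is a $4$-form on a $3$-manifold and vanishes identically, so (\ref{EquaFunctionF1}) is vacuous when $n=1$ --- which is exactly the (correct) reasoning the paper itself uses in the $n=1$ case of Proposition~\ref{fconstraint}. Moreover, item (1) is not merely unproven but false as stated: on $\mathbb{R}^3$ take $\eta=dz$, $\xi=\partial_z$, $\phi\partial_x=\partial_y$, $\phi\partial_y=-\partial_x$, $\phi\xi=0$, and $g=\exp(z^2)(dx^2+dy^2)+dz^2$, so that $\Phi=-\exp(z^2)\,dx\wedge dy$. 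With $\omega=0$ (closed) and $f=z$ one checks
\begin{equation}
d\eta=0=\omega\wedge\eta, \qquad d\Phi=-2z\exp(z^2)\,dz\wedge dx\wedge dy=2f\,\eta\wedge\Phi+2\omega\wedge\Phi,\nonumber
\end{equation}
so all hypotheses of the proposition hold, yet $df+f\omega=dz\neq 0$. Thus $df=-f\omega$ is a special feature of Example~\ref{Example1}, not a consequence of the structure equations in dimension $3$; your ``honest conclusion'' that $df+f\omega$ remains unconstrained when $n=1$ is the correct statement, and item (1) of the proposition (together with the corresponding sentence of the paper's proof) needs to be amended.
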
 
 \begin{proof}
 	The closure of $\omega$ and the structure equations imply the consistency condition $d^2\Phi = 0$.  Compute: 	
 	\begin{equation}
 		d(d\Phi)  = 2\,df\wedge\eta\wedge\Phi + 2f\,d(\eta\wedge\Phi) 
 		+ 2\,d\omega\wedge\Phi - 2\omega\wedge d\Phi.\nonumber
 	\end{equation} 
 	Using $d\omega=0$ and substituting $d\Phi$ again,
 	\begin{align}
 		0 &= 2\,df\wedge\eta\wedge\Phi + 2f\,d(\eta\wedge\Phi) 
 		- 2\omega\wedge\bigl(2f\,\eta\wedge\Phi + 2\omega\wedge\Phi\bigr) \nonumber\\
 		&= 2\,df\wedge\eta\wedge\Phi + 2f\,d(\eta\wedge\Phi) 
 		- 4f\,\omega\wedge\eta\wedge\Phi - 4\,\omega\wedge\omega\wedge\Phi.\nonumber
 	\end{align}
 	Since $\omega\wedge\omega=0$, we obtain
 	\begin{equation}\label{EquaFunctionF2}
 	0 = df\wedge\eta\wedge\Phi + f\,d(\eta\wedge\Phi) - 2f\,\omega\wedge\eta\wedge\Phi.   
 	\end{equation}
 	Now compute $d(\eta\wedge\Phi)$ using the given equations 
 	\begin{align}
 		d(\eta\wedge\Phi) &= d\eta\wedge\Phi - \eta\wedge d\Phi\nonumber \\ 
 		&= \omega\wedge\eta\wedge\Phi 
 		- 2f\,\eta\wedge\eta\wedge\Phi 
 		- 2\,\eta\wedge\omega\wedge\Phi \nonumber\\
 		&= 3\,\omega\wedge\eta\wedge\Phi. \nonumber
 	\end{align} 
 	Substituting this into (\ref{EquaFunctionF2}) yields
 	$$
 	0 = df\wedge\eta\wedge\Phi + 3f\,\omega\wedge\eta\wedge\Phi 
 	- 2f\,\omega\wedge\eta\wedge\Phi 
 	= df\wedge\eta\wedge\Phi + f\,\omega\wedge\eta\wedge\Phi.
 	$$
 	Hence
 	$$
 	(df + f\,\omega)\wedge\eta\wedge\Phi = 0,
 	$$
 	which is (\ref{EquaFunctionF1}).
 	
 	If $\dim M = 3$, the $3$-form $\eta\wedge\Phi$ is a volume form, so (\ref{EquaFunctionF1}) forces $df+f\omega = 0$, i.e., $df = -f\omega$.
 	
 	If $\dim M \geq 5$, write $df+f\omega = \lambda \eta + \beta$, where $\lambda \in C^\infty(M)$ and $\beta(\xi)=0$.  Then
 	\begin{equation}
 	0  = (\lambda\eta+\beta)\wedge\eta\wedge\Phi  = - \eta\wedge(\beta\wedge\Phi).\nonumber
 \end{equation}
 	Since $\eta$ is nowhere zero, we have $\beta\wedge\Phi = 0$ when restricted to $\ker\eta$.  By Lemma~\ref{lem:injectivity-wedge}, the map $\alpha\longmapsto\alpha\wedge\Phi$ is injective on $(\ker\eta)^*$; therefore $\beta=0$ and $df+f\omega = \lambda\eta$, which completes the proof.
 \end{proof}  
 As an example in dimension $3$, we have the following.
 \begin{example}\label{Example1}
 \emph{Consider the $3$-dimensional manifold 
 $$ 
 M^{3}=\{p\in \mathbb{R}^{3}: x\neq 0,\; z\neq 0\},
 $$ 
 where $p=(x,y,z)$ are the standard coordinates in $\mathbb{R}^3$. Define an almost contact metric structure by
$$
\eta = \exp(x)\,dz, \; \;  \xi =\exp(-x) \,\partial_z,
$$
with the $(1,1)$-tensor $\phi$ and metric $g$ given by
$ 
\phi \partial_{x} =\partial_{y},\; \phi \partial_{y} = -\partial_{x}, \;  \phi \xi = 0
$
and
$
g = \exp(z) (dx^{2} +dy^{2}) + \exp(2x) dz^{2}.
$  
It is straightforward to verify that (\ref{EquaImport1}) and (\ref{EquaImport2}), so $(\phi,\xi,\eta,g)$ is an almost contact metric structure. The non-vanishing components of the fundamental $2$-form $\Phi$, with respect to the basis $\{\partial_x,\partial_y,\partial_z\}$, are given by 
$ 
\Phi(\partial_{x}, \partial_{y})= -  \exp(z) 
$
and
$
\Phi(\partial_{y}, \partial_{x}) =  \exp(z).
$ 
Hence
$ 
 \Phi = -  \exp(z)\, dx \wedge dy.
$ 	
Let us define the closed $1$-form  
$ 
\omega = dx. 
$
It is easy to see that $d\omega=0$. A direct computation shows
$ 
d\eta = d( \exp(x) dz) =  \exp(x)  dx\wedge dz = \omega\wedge\eta.
$  	   
Next, since $\Phi=-  \exp(z) dx\wedge dy$, we have $d\Phi = -  \exp(z) dz \wedge dx\wedge dy$. The structural condition requires
$ 
d\Phi = 2f\,\eta\wedge \Phi + 2\omega \wedge \Phi.
$ 
Now,
\begin{align}
2(f\eta+\omega)\wedge\Phi & = 2(f \exp(x)dz + dx )\wedge (-  \exp(z)\,dx\wedge dy)\nonumber\\
& = - 2 \exp(z+x)f\,dz \wedge dx\wedge dy. \nonumber
\end{align} 
So, we have
$ 
 2 \exp(z+x)f  =   \exp(z)  
$ 
 which leads to
$ 
f(x) = \frac{1}{2} \exp(-x)\neq 0.
$
Finally, we verify the integrability condition $df = -f\omega$:
$$
df = d\left(\frac{1}{2} \exp(-x)\right) = -\frac{1}{2}  \exp(-x) dx = -f\,dx = -f\,\omega.
$$  	
Thus, $f$ is non-vanishing and satisfies $df = -f\omega$ as required. Consider the open neighborhood $U$ of $M$ given by $U:= \{p\in M^{3}: x> 0\}$, and there exists a differentiable function $\sigma$ on $U$ such that $\omega = d\sigma$, where $\sigma = x  + k$ with $k$ a constant. 	Finally,$\omega = dx$ and $\eta$ are not proportional, since $dx$ and $dz$ are linearly independent at each point.
 Therefore, $(M^{3}, \phi, \xi, \eta, g)$ is an l.c. almost generalized $f$-cosymplectic manifold with $f(x) =  \frac{1}{2} \exp(-x)$ and $\omega$ not proportional to $\eta$..
  }	
 \end{example}
 This establishes the existence of such a structure on a $3$-dimensional manifold, and we have the following results. 
 \begin{theorem}
 A $3$-dimensional manifold admits a locally conformal almost generalized $f$-cosymplectic structure for which the Lee form $\omega$ is not proportional to $\eta$.
 \end{theorem}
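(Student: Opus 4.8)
The plan is to read the statement as a pure existence assertion and to settle it by exhibiting an explicit model, namely the structure constructed in Example \ref{Example1}. First I would fix the open submanifold $M^{3}=\{p\in\mathbb{R}^{3}:x\neq 0,\ z\neq 0\}$ together with the almost contact metric data $\eta=\exp(x)\,dz$, $\xi=\exp(-x)\,\partial_{z}$, the $(1,1)$-tensor $\phi$ determined by $\phi\partial_{x}=\partial_{y}$, $\phi\partial_{y}=-\partial_{x}$, $\phi\xi=0$, and $g=\exp(z)(dx^{2}+dy^{2})+\exp(2x)\,dz^{2}$. The preliminary task is to confirm that (\ref{EquaImport1}) and (\ref{EquaImport2}) hold, so that this is a genuine almost contact metric structure; this is a direct tensorial check, which simultaneously records the fundamental $2$-form as $\Phi=-\exp(z)\,dx\wedge dy$.

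By Theorem \ref{TheoImport1} it then suffices to produce a single globally defined closed $1$-form $\omega$, not proportional to $\eta$, and a smooth function $f$, such that the coupled system (\ref{CharacterRela}) is satisfied. I would test the transverse candidate $\omega=dx$, for which $d\omega=0$ is immediate and $d\eta=\exp(x)\,dx\wedge dz=\omega\wedge\eta$ holds at once. The substantive step is the second equation: after computing $d\Phi=-\exp(z)\,dz\wedge dx\wedge dy$, I would expand the right-hand side as $2(f\eta+\omega)\wedge\Phi=-2\exp(z+x)f\,dz\wedge dx\wedge dy$ and match coefficients, which forces $f(x)=\tfrac{1}{2}\exp(-x)$, a nowhere-vanishing function on $M^{3}$.

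With $\omega$ and $f$ so determined, I would verify the compatibility condition of Proposition \ref{prop:integrability-f}, which in dimension $3$ reduces to $df=-f\omega$; here $df=-\tfrac{1}{2}\exp(-x)\,dx=-f\,dx=-f\omega$, so the constraint is met. Theorem \ref{TheoImport1} then certifies that $(M^{3},\phi,\xi,\eta,g)$ is locally conformal almost generalized $f$-cosymplectic; on the neighbourhood $U=\{x>0\}$ one exhibits the conformal factor explicitly via $\omega=d\sigma$ with $\sigma=x+k$. Finally, since $\omega=dx$ and $\eta=\exp(x)\,dz$ are pointwise linearly independent (because $dx$ and $dz$ are), the Lee form is not proportional to $\eta$, which is precisely the desired conclusion.

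I do not expect an obstacle of principle, the claim being existential and the model explicit. The only delicate point is the \emph{simultaneous} solvability of both equations of (\ref{CharacterRela}) with one and the same transverse $\omega$: it is the freedom in the function $f$ that absorbs the mismatch between $d\Phi$ and $2\omega\wedge\Phi$, and this is exactly the low-dimensional flexibility predicted by Proposition \ref{prop:integrability-f}, which would be unavailable for $n\geq 2$, where $df+f\omega$ is forced to be proportional to $\eta$.
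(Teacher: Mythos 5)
Your proposal is correct and follows essentially the same route as the paper: the theorem is proved precisely by the explicit model of Example \ref{Example1}, with $\omega=dx$, $f(x)=\tfrac{1}{2}\exp(-x)$, verification of both equations of (\ref{CharacterRela}), the check $df=-f\omega$, and the observation that $dx$ and $\exp(x)\,dz$ are pointwise linearly independent. All computations match the paper's, so nothing further is needed.
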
 
The following is a Corollary to Theorem \ref{TheoImport1}.
\begin{corollary}
Let $(M,\phi,\xi,\eta, g)$ be an l.c. almost generalized $f$-cosymplectic manifold.
Then there exists a closed $1$-form $\omega$ such that 
$$ 
d\eta=\omega\wedge\eta\;\;\mbox{and}\;\;d\Phi = 2f \eta\wedge\Phi + 2\omega\wedge\Phi.
$$
\begin{enumerate}
\item[(i)]  If $f=0$ and $\omega=d\sigma$ locally (or globally if $\omega$ is exact), then under the conformal change 
$ 
\phi'=\phi, \; \xi' = \exp( \sigma)\xi, \;  \eta' = \exp(-\sigma) \eta, \;  g' = \exp(-2\sigma)g,
$ 
the rescaled structure satisfies
$ 
d\eta' = 0, \;  d\Phi' = 0.
$ 
Hence, $(M,\phi',\xi',\eta',g')$ is an almost cosymplectic manifold. In particular, the above class recovers locally conformal almost cosymplectic manifolds. 
\item[(ii)] If $f$ is a nonzero constant and $\omega=d\sigma$ locally (or globally), then under the same conformal change one has
$ 
d\eta' = 0,	\;  d\Phi' = 2f \exp(\sigma) \eta'\wedge\Phi'.
$ 
Thus, $(M,\phi',\xi',\eta',g')$ is an almost generalized $ f \exp(\sigma) $-cosymplectic manifold. Consequently, the class recovers locally conformal almost generalized $f$-cosymplectic manifolds. 
\end{enumerate}
\end{corollary}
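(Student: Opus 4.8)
The existence of the closed $1$-form $\omega$ satisfying the stated equations is nothing but the content of Theorem~\ref{TheoImport1}, so the opening assertion of the corollary requires no further argument. For both parts the plan is a direct conformal computation, and the first step I would carry out is to record how the relevant forms transform under the rescaling. Since $\phi'=\phi$ and $g'=\exp(-2\sigma)g$, the definition $\Phi'(X,Y)=g'(X,\phi'Y)$ immediately gives $\Phi'=\exp(-2\sigma)\Phi$, while $\eta'=\exp(-\sigma)\eta$ is part of the data. I would also record the inverse relations $\eta=\exp(\sigma)\eta'$ and $\Phi=\exp(2\sigma)\Phi'$, which are needed to re-express the result of the computation back in terms of the primed structure.

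For part (i), with $f=0$ the structure equations reduce to $d\eta=\omega\wedge\eta$ and $d\Phi=2\omega\wedge\Phi$ with $\omega=d\sigma$. Differentiating $\eta'=\exp(-\sigma)\eta$ and using $d\eta=d\sigma\wedge\eta$, the two resulting terms cancel and $d\eta'=0$. The same mechanism applies to $\Phi'=\exp(-2\sigma)\Phi$: the term $-2\,d\sigma\wedge\Phi$ produced by differentiating $\exp(-2\sigma)$ is exactly cancelled by the $2\,d\sigma\wedge\Phi$ coming from $d\Phi=2\,d\sigma\wedge\Phi$, giving $d\Phi'=0$. Hence the rescaled structure is almost cosymplectic, which recovers the locally conformal almost cosymplectic class.

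For part (ii), $f$ is a nonzero constant and the computation of $d\eta'$ is identical, so $d\eta'=0$ again. Differentiating $\Phi'=\exp(-2\sigma)\Phi$ and substituting $d\Phi=2f\,\eta\wedge\Phi+2\,d\sigma\wedge\Phi$, the $d\sigma$-terms cancel as before and only the Reeb contribution survives, leaving $d\Phi'=2f\exp(-2\sigma)\,\eta\wedge\Phi$. The final step is to convert this into primed quantities: using $\eta\wedge\Phi=\exp(3\sigma)\,\eta'\wedge\Phi'$, the prefactor collapses to $2f\exp(\sigma)$, so that $d\Phi'=2f\exp(\sigma)\,\eta'\wedge\Phi'$, exhibiting the rescaled structure as almost generalized $f\exp(\sigma)$-cosymplectic.

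No step presents a genuine difficulty; the argument is a verification rather than a discovery. The only point demanding care is the consistent bookkeeping of the conformal weights, in particular recognizing that $\Phi$ acquires the factor $\exp(-2\sigma)$ rather than $\exp(-\sigma)$ because it is built from the metric, and correctly tracking the combined weight $\exp(3\sigma)$ in $\eta\wedge\Phi$ when returning to the primed frame. Getting these exponents right is precisely what makes the Kenmotsu-type factor $\exp(\sigma)$ in part (ii) come out as claimed.
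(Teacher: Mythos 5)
Your proposal is correct and follows essentially the same route as the paper: a direct computation of $d\eta'$ and $d\Phi'$ using $\omega=d\sigma$, in which the $d\sigma$-terms cancel and the surviving term $2f\exp(-2\sigma)\,\eta\wedge\Phi$ is rewritten via $\eta\wedge\Phi=\exp(3\sigma)\,\eta'\wedge\Phi'$ to give the factor $2f\exp(\sigma)$. The only cosmetic difference is that the paper runs the computation once with general $f$ and specializes to $f=0$ and $f$ a nonzero constant at the end, whereas you treat the two parts separately; the content is identical.
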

\begin{proof}
Assume that $\omega=d\sigma$ locally (or globally if exact). Let compute the conformal derivatives of $\eta'$ and $\Phi'$. We have 
\begin{align}
d\eta' & = d(\exp(-\sigma)\eta) = \exp(-\sigma)(-d\sigma\wedge\eta + d\eta) =0,\nonumber	\\
\mbox{and}\;\;d\Phi' &= d(\exp(-2\sigma)\Phi) = \exp(-2\sigma)\{-2\omega\wedge\Phi + 2f\,\eta\wedge\Phi + 2\omega\wedge\Phi\} \nonumber\\ 
&= 2f \exp( \sigma)  (\exp(- \sigma)\eta)\wedge(\exp(-2\sigma)\Phi)\nonumber\\
&= 2f\exp(\sigma)\eta'\wedge \Phi'.\nonumber
\end{align}  
If $f=0$ the last identity reads $d\Phi'=0$, so $(\phi', \xi', \eta', g')$ is almost cosymplectic. If $f\neq0$ is constant, we obtain 
$$
d\Phi'=2f\, \exp(\sigma)\,\eta'\wedge\Phi',
$$
and  the structure $(\phi', \xi', \eta', g')$ is almost generalized $f \exp(\sigma)$-cosymplectic. 
\end{proof}  
Note that the terminology \textit{locally conformal} reflects the fact that the closed $1$-form $\omega$ need not be globally exact. If $\omega=d\sigma$ globally, then the manifold is \textit{globally conformal} to an almost generalized $f$-cosymplectic structure. For instance almost cosymplectic if $f=0$, almost Kenmotsu if $f\neq 0$ constant. If $\omega$ is only locally exact, i.e., $\omega|_{U_{t}} = d\sigma_{t}$ on each open set of a covering $\{U_{t}\}$, then the rescaling $\eta_{t} =\exp(-\sigma_{t})\eta$, $\Phi_{t}=\exp(-2\sigma_{t})\Phi$ gives the desired generalized $f$-cosymplectic structures on each $U_t$, but no global conformal factor exists. This is entirely analogous to the distinction between \textit{locally conformal} and \textit{globally conformal} structures in Hermitian and cosymplectic geometry. 
\begin{example}\label{Example2}
 \emph{Let $(N^{2n}, \omega_{N}, g_N)$ be a symplectic manifold of even dimension $2n$, with compatible almost complex structure $J$. Consider the product
$$
M = S^1 \times N,
$$
with angular coordinate $\theta$ on $S^1$.  Then $M$ has odd dimension $(2n+1)$. Define an almost contact metric structure on $M$ by
$$
\xi =  \frac{\partial}{\partial \theta}, \;\;\eta = d\theta, \;\;\phi_{|_{TN}} = J, \;\; \phi \xi = 0,\;\;
g = d\theta^{2} + g_{N}.
$$
The second fundamental form of this structure is
$$ 
\Phi(X,Y) = g(X, \phi Y) = \omega_{N}(X,Y).
$$ 
Since $d\eta = 0$ and $d\Phi = 0$, the manifold $(M,\phi,\xi,\eta,g)$ is cosymplectic.  Now, applying the conformal change
$$
\xi' = e^{ \theta}\xi, \;\;\eta' = e^{-\theta}\eta, \; \; g' = \exp(-2\theta) g, \;\;\Phi' = \exp(-2\theta)\Phi .
$$
A direct computation gives
$$
d\eta' = - d\theta \wedge \eta', \;  d\Phi' = - 2 d\theta \wedge \Phi',
$$
with $f=0$. Therefore $(M, \phi, \xi, \eta', g')$ is a locally conformal cosymplectic manifold with Lee form $\omega = - d\theta$. Since $\omega$ is not globally exact on $S^1$, the structure is not globally conformal cosymplectic.}
\end{example} 
This example illustrates the distinction between locally conformal and globally conformal cosymplectic structures. Therefore we have the following result.
 \begin{theorem}\label{Lefschetzype}
 Let $(M,\phi,\xi,\eta,g)$ be a $(2n+1)$-dimensional almost contact metric manifold with second fundamental $2$-form $\Phi$. Assume that
 $$
 d\eta = \omega \wedge \eta \;\;\mbox{and}\;\; d\Phi = 2f\,\eta\wedge\Phi + 2\omega\wedge\Phi,
 $$
 for some smooth function $f$ and a closed $1$-form $\omega$. If $n\geq 2$ (i.e.\ $\dim M \geq 5$), then $\omega$ must be proportional to $\eta$ (hence $\omega_{|_{\ker\eta}} = 0$).  
 \end{theorem}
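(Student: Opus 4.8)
The plan is to follow the template of Proposition \ref{prop:integrability-f}: split $\omega$ relative to the contact distribution, manufacture a pointwise identity of the shape $\omega_0\wedge\Phi=0$ on $\ker\eta$, and then annihilate $\omega_0$ using the Lefschetz injectivity of Lemma \ref{lem:injectivity-wedge}. Concretely, I would write
$$
\omega = a\,\eta + \omega_0, \qquad a\in C^\infty(M),\quad \omega_0(\xi)=0,
$$
so that $\omega_0$ is a section of $(\ker\eta)^*$, and the desired conclusion ``$\omega$ proportional to $\eta$'' is precisely the assertion $\omega_0=0$.

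The natural source of an identity on $\omega_0$ is the second structure equation itself. Since $d\eta=\omega\wedge\eta$ gives $d\eta(X,Y)=0$ for horizontal $X,Y$, the distribution $\ker\eta$ is integrable; restricting $d\Phi = 2f\,\eta\wedge\Phi + 2\omega\wedge\Phi$ to horizontal vectors, every term carrying an explicit factor $\eta$ drops out and one is left with
$$
(d\Phi)|_{\ker\eta} = 2\,\omega_0\wedge\Phi|_{\ker\eta}.
$$
If one could argue that $(d\Phi)|_{\ker\eta}=0$, then $\omega_0\wedge\Phi=0$ on $\ker\eta$; since $n\geq 2$, wedging with $\Phi^{n-2}$ yields $\omega_0\wedge\Phi^{n-1}=0$, and Lemma \ref{lem:injectivity-wedge} forces $\omega_0=0$. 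An alternative route is to compute $d(\eta\wedge\Phi)=3\,\omega_0\wedge\eta\wedge\Phi$ exactly as in Proposition \ref{prop:integrability-f} and try to conclude that $\omega_0\wedge\eta\wedge\Phi$ vanishes.

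I expect the step ``$(d\Phi)|_{\ker\eta}=0$'' (equivalently $\omega_0\wedge\Phi=0$) to be the main obstacle, and I do not believe it follows from the stated hypotheses. The reason is structural: on each leaf $L$ of the integrable distribution $\ker\eta$ the identity above reads $d(\Phi|_L)=2\,\omega_0|_L\wedge\Phi|_L$ with $\Phi|_L$ a nondegenerate $2$-form and $\omega_0|_L$ closed, i.e.\ $(L,\Phi|_L)$ is a \emph{locally conformal symplectic} manifold of dimension $2n\geq 4$ with Lee form $2\,\omega_0|_L$, and such leaves routinely carry a nonzero closed Lee form, so nothing forces $\omega_0|_L=0$. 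This strongly suggests the statement is too strong. Indeed, on $\mathbb{R}^5$ with coordinates $(x_1,\dots,x_4,w)$, setting
$$
\eta = e^{x_1/2}\,dw, \quad \Phi = e^{x_1}\bigl(dx_1\wedge dx_2 + dx_3\wedge dx_4\bigr), \quad \omega = \tfrac12\,dx_1, \quad f=0,
$$
one checks directly that $d\eta=\omega\wedge\eta$, $d\Phi=2\omega\wedge\Phi$, and $d\omega=0$, while a compatible $(\phi,g)$ exists because $\Phi|_{\ker\eta}$ is symplectic; yet $\omega=\tfrac12\,dx_1$ is visibly not proportional to $\eta=e^{x_1/2}\,dw$. (This structure is even globally conformal, via $\sigma=\tfrac12 x_1$, to the flat cosymplectic structure $\eta'=dw$, $\Phi'=dx_1\wedge dx_2+dx_3\wedge dx_4$.) Thus the anticipated obstacle is not a matter of technique but a genuine failure of the claimed rigidity: to obtain a theorem of this kind one would have to add a hypothesis guaranteeing $(d\Phi)|_{\ker\eta}=0$ — for instance, that the leaves of $\ker\eta$ are genuinely symplectic rather than merely locally conformal symplectic.
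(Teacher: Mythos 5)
Your analysis is correct, and your counterexample is decisive: Theorem \ref{Lefschetzype} is false as stated, and your example genuinely lies in the hypothesis class. To complete it, take $\xi=e^{-x_1/2}\partial_w$, define $\phi$ by $\phi\partial_{x_1}=-\partial_{x_2}$, $\phi\partial_{x_2}=\partial_{x_1}$, $\phi\partial_{x_3}=-\partial_{x_4}$, $\phi\partial_{x_4}=\partial_{x_3}$, $\phi\partial_w=0$, and set $g=e^{x_1}\bigl(dx_1^2+dx_2^2+dx_3^2+dx_4^2+dw^2\bigr)$; then $(\phi,\xi,\eta,g)$ is an almost contact metric structure on $\mathbb{R}^5$ whose fundamental form is exactly your $\Phi=e^{x_1}(dx_1\wedge dx_2+dx_3\wedge dx_4)$, and $d\eta=\omega\wedge\eta$, $d\Phi=2\omega\wedge\Phi$ hold with $\omega=\tfrac12\,dx_1$ nowhere proportional to $\eta=e^{x_1/2}dw$. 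Conceptually this had to happen: by Theorem \ref{TheoImport1} the system (\ref{CharacterRela}) is precisely the conformally invariant formulation of the structure, so rescaling the flat cosymplectic $\mathbb{R}^{2n+1}$ by an arbitrary factor $e^{\sigma}$ produces a member of the class with Lee form $\omega=d\sigma$ an arbitrary exact $1$-form; no rigidity on the \emph{direction} of $\omega$ can hold in any dimension. Since your example is (even globally) conformal to a cosymplectic structure, it is l.c.\ almost generalized $f$-cosymplectic with $f=0$, so Theorem \ref{LeeProportionalTheorem} fails as well; Example \ref{Example3} only shows the conclusion \emph{can} hold, not that it must. The genuine dimensional dichotomy in the paper concerns $df$, respectively $df+f\omega$ (Propositions \ref{fconstraint} and \ref{prop:integrability-f}), not $\omega$.

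You also predicted exactly where any proof must break, and that is indeed where the paper's argument fails. The paper decomposes $\omega=\alpha+h\eta$ with $\alpha(\xi)=0$, derives $d\Phi^{n}=2n(f+h)\,\eta\wedge\Phi^{n}+2n\,\alpha\wedge\Phi^{n}$, and evaluates this on $X_1,\dots,X_{2n+1}\in\ker\eta$ to conclude $\alpha\wedge\Phi^{n}=0$ on $\mathcal{V}=\ker\eta$. But this evaluation is vacuous: any $2n+1$ vectors in the $2n$-dimensional space $\mathcal{V}$ are linearly dependent, so \emph{every} $(2n+1)$-form --- $d\Phi^{n}$, $\eta\wedge\Phi^{n}$, and $\alpha\wedge\Phi^{n}$ for arbitrary $\alpha$ --- vanishes on them; in fact $\alpha\wedge\Phi^{n}\equiv0$ identically on $M$, since $\alpha$ and $\Phi$ are both annihilated by $\xi$ and the horizontal distribution has rank $2n$. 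No information about $\alpha$ is extracted. The paper then writes $\alpha\wedge\Phi^{n}=(\alpha\wedge\Phi)\wedge\Phi^{n-1}$ and invokes Lemma \ref{lem:injectivity-wedge}, but that lemma asserts injectivity of $\beta\mapsto\beta\wedge\Phi^{n-1}$ only on $1$-forms, with nontrivial target $\Lambda^{2n-1}\mathcal{V}^{*}$; applied to the $3$-form $\alpha\wedge\Phi$ the target is $\Lambda^{2n+1}\mathcal{V}^{*}=0$, so the map is identically zero and cannot be injective. Thus the missing step is exactly your $\omega_0\wedge\Phi|_{\ker\eta}=0$, which is never established and, as your leafwise locally conformal symplectic observation and your example show, does not follow from the hypotheses: in your example $\omega_0\wedge\eta\wedge\Phi\neq0$. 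Your conclusion --- that rigidity requires an added hypothesis forcing $(d\Phi)|_{\ker\eta}=0$ --- is the correct repair.
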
  
 \begin{proof}
Fix a point $p\in M$ and set $\mathcal{V}:=\ker\eta_p$, a $2n$-dimensional vector space. Let $\Omega =\Phi|_{\mathcal V}$. Since $(M,\phi, \xi, \eta, g)$ is an almost contact metric manifold, $\Omega$ is non-degenerate and hence $\Omega^{n}\in\Lambda^{2n}\mathcal V^{*}$ is a nonzero top-degree form.
  	
From the hypothesis
 $$
 d\Phi=2f \eta\wedge\Phi + 2 \omega\wedge\Phi,
 $$
 we obtain
$$
d \Phi^{n} = n d\Phi\wedge\Phi^{n-1} = 2n f \eta\wedge\Phi^{n} + 2n \omega\wedge\Phi^{n}.
$$
At $p$, we decompose $\omega$ as $\omega=\alpha+h\eta$, where $\alpha(\xi)=0$ and $h=\omega(\xi)$. Then
 $$
  \omega\wedge\Phi^{n} = \alpha\wedge\Phi^{n} + h \eta\wedge\Phi^{n},
  $$
  and hence
  $$
  d \Phi^{n} = 2n(f+h) \eta\wedge\Phi^{n}+2n\,\alpha\wedge\Phi^{n}.
  $$
 Now, we evaluate this identity on vectors $X_1,\dots,X_{2n+1}\in\mathcal{V}$.  Since $\eta(X_i)=0$ for all $i$, we have
$$
(\eta\wedge\Phi^{n})(X_1,\dots,X_{2n+1})=0.
$$
Since $\Phi^{n}$ restricts to a top-degree form on $\mathcal{V}$, its exterior derivative vanishes when evaluated on vectors tangent to $\mathcal{V}$, and   therefore one gets
$$
0=d \Phi^{n}(X_1,\dots,X_{2n+1}) = 2n (\alpha\wedge\Phi^{n})(X_1,\dots,X_{2n+1}).
$$
Thus
$$
\alpha\wedge\Phi^{n}=0 \;\; \text{on} \;\;\mathcal{V}.
$$
Since $\Phi|_{\mathcal V}=\Omega$ is non-degenerate, we can write
$$
\alpha\wedge\Phi^{n}=(\alpha\wedge\Phi)\wedge\Phi^{\,n-1}.
$$
For $n\ge2$, the map
$$
\beta\longmapsto \beta\wedge\Phi^{\,n-1}
$$
is injective on $\mathcal V^*$ by Lemma \ref{lem:injectivity-wedge}. Hence $\alpha\wedge\Phi=0$ on $\mathcal{V}$ implies $\alpha=0$ at $p$. Since $p$ is an arbitrary point, $\alpha$ vanishes identically on $M$, and therefore we have
$$
\omega = h \eta \;\; \text{on}\;\; M.
$$
In particular, $\omega|_{\mathcal{V}}=0$, which completes the proof.
\end{proof}
We therefore deduce the following result. 
 \begin{theorem} \label{LeeProportionalTheorem}
 Let $(M^{2n+1},\phi,\xi,\eta,g)$ be a locally conformal almost generalized $f$-cosymplectic manifold of dimension $\ge 5$. Then, the Lee form $\omega$ is proportional to $\eta$, that is, $\omega = \omega(\xi)\eta$.
 \end{theorem}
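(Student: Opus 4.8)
The plan is to obtain this result directly by chaining together the characterization of Theorem~\ref{TheoImport1} with the rigidity established in Theorem~\ref{Lefschetzype}, so that essentially no new computation is required; the statement is really a corollary, and I would present it as such.

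First I would invoke Theorem~\ref{TheoImport1}: since $M$ is assumed to be locally conformal almost generalized $f$-cosymplectic, there exists a globally defined closed $1$-form $\omega$ on $M$ satisfying the characterizing system
\begin{equation}
d\eta = \omega\wedge\eta \;\;\mbox{and}\;\; d\Phi = 2f\,\eta\wedge\Phi + 2\omega\wedge\Phi. \nonumber
\end{equation}
These are precisely the structural hypotheses required by Theorem~\ref{Lefschetzype}, which take as input a smooth function $f$ and a closed $1$-form $\omega$ obeying exactly this pair of equations.

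Next, because $\dim M = 2n+1 \geq 5$ is equivalent to $n\geq 2$, the dimensional hypothesis of Theorem~\ref{Lefschetzype} is met, and I would apply it to conclude that $\omega$ is proportional to $\eta$, say $\omega = h\,\eta$ for some smooth function $h$ on $M$. To pin down the proportionality factor, I would evaluate both sides at the Reeb field $\xi$: using $\eta(\xi)=1$ gives $h = \omega(\xi)$, and hence $\omega = \omega(\xi)\,\eta$, as claimed.

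At the level of this theorem there is no genuine obstacle, since all the work has been done upstream. The only substantive content lies in Theorem~\ref{Lefschetzype}, whose argument rests on the Lefschetz-type injectivity of $\beta\longmapsto\beta\wedge\Phi^{\,n-1}$ supplied by Lemma~\ref{lem:injectivity-wedge}; that injectivity is precisely what fails when $n=1$, so the essential use of $n\geq 2$ is the real point, and it has already been isolated in the preceding theorem. Consequently the proof here amounts only to checking that the two hypotheses match and reading off the coefficient $\omega(\xi)$.
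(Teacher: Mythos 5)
Your proposal is correct and takes exactly the paper's route: the paper offers no separate proof of Theorem \ref{LeeProportionalTheorem}, presenting it as an immediate deduction (``We therefore deduce the following result'') from Theorem \ref{Lefschetzype} applied to the closed Lee form supplied by the characterization in Theorem \ref{TheoImport1}. Your identification of the proportionality factor by evaluating at $\xi$ (using $\eta(\xi)=1$) matches the decomposition $\omega=\alpha+h\eta$ with $h=\omega(\xi)$ already carried out inside the proof of Theorem \ref{Lefschetzype}.
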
  
In contrast with the even-dimensional locally conformal symplectic case, where the Lee form may be genuinely transverse \cite{Vai}, odd-dimensional almost contact analogues exhibit strong rigidity. In particular, in dimension at least $5$, Lee-type forms necessarily align with the Reeb direction, as already observed in related contexts for locally conformal almost cosymplectic manifolds \cite{Ol1}.
 
Next, we construct an example of an almost contact metric manifold of dimension $5$ which is not almost generalized $f$-cosymplectic, but is locally conformal almost generalized $f$-cosymplectic and satisfies the conclusion of Theorem~\ref{LeeProportionalTheorem}. 
 \begin{example} \label{Example3}
 \emph{Let
 $ 
 M^{5} =\left\{(x,y_{1},y_{2},z_{1},z_{2})\in\mathbb{R}^{5} :  z_{1}\neq 0\right\},
$ 
with standard coordinates $(x,y_{1},y_{2},z_{1},z_{2})$. Define a $1$-form $\eta$ and a vector field $\xi$ by
$$
\eta =  dz_{1}, \;\; \xi =  \partial_{z_{1}}.
$$   	
Define the $(1,1)$-tensor field $\phi$ by
$$
\phi(\partial_x)=\partial_{y_1},\;\; \phi(\partial_{y_1})=-\partial_x,\;\; 
 \phi(\partial_{z_2})=\partial_{y_2},\;\;\phi(\partial_{y_2})=-\partial_{z_2},\;\;\phi\xi=0.
$$   	
Let the Riemannian metric $g$ be given by
$$
g = \exp(z_1)\bigl(dx^2+dy_1^2+dz_2^2+dy_2^2\bigr) +  dz_1^2.
$$
A direct computation shows that (\ref{EquaImport1}) and (\ref{EquaImport2}) are satisfied. Hence $(\phi,\xi,\eta,g)$ is an almost contact metric structure. The fundamental $2$-form $\Phi(\cdot, \cdot) = g(\cdot,\phi \cdot)$ is given by
$$  
\Phi = - \exp(z_1) \bigl(dx\wedge dy_1 + dz_2\wedge dy_2\bigr).
$$   	
Let $b$ be a smooth non-constant integrable function $b= b(z_{1})$ and define the $1$-form
$$ 
\omega =  b\,dz_1.
$$ 
Clearly $d\omega=0$ and
$ 
 d\eta 	= 	d( dz_1) = 0 = 	\omega\wedge\eta.
$    	
Moreover,
$ 
d\Phi = - \exp(z_1)dz_1\wedge \left(dx\wedge dy_1 + dz_2\wedge dy_2\right).
$ 
On the other hand,  one has 
\begin{equation}
2(f\eta+\omega)\wedge\Phi  = -2\exp(z_1)(f + b)\, dz_1\wedge(dx\wedge dy_1 + dz_2\wedge dy_2).\nonumber
\end{equation}  
Thus the structural equation
$$ 
d\Phi = 2f\,\eta\wedge\Phi + 2\omega\wedge\Phi
$$ 
holds if and only if
$$
2(f + b)=1,
$$
i.e.,
$ 
f = \frac{1}{2}- b.
$  
The integrability condition for $n\ge 2$ gives
$$
df + f\omega = \left\{-b'  + b\left(\frac{1}{2}- b\right) \right\} dz_1 = \lambda \eta
$$
with $\lambda =  - b'  + b\left(\frac{1}{2}- b\right)$.
Let
$ 
U=\{p\in M^{5}: z_1>0\}.
$ 
On $U$ there exists a smooth function
$ 
\sigma = \int b\; dz_1
$ 
such that $\omega = d\sigma$. Therefore, after the conformal change
$$ 
\phi'=\phi,\;\; \xi'=\exp(\sigma)\xi,\;\;\eta'=\exp(-\sigma)\eta,\;\;g'=\exp(-2\sigma) g,
$$ 
the new structure satisfies $d\eta'=0$ and 
$$
d\Phi' = 2f'\eta'\wedge\Phi'\;\;\mbox{with}\;\; f' = \left(\frac{1}{2}- b\right)\exp(\int b\; dz_1).
$$
Hence $(\phi',\,\xi', \, \eta',\, g')$ is an almost generalized $f\exp(\sigma)$-cosymplectic manifold. Since $d\Phi\neq 2f\eta\wedge \Phi$, the original structure is not almost generalized $f$-cosymplectic. Furthermore, the Lee form satisfies
$ 
\omega = \omega(\xi)\,\eta,
$ 
$  	
\omega(\xi)= b,
$ 
which is consistent with Theorem~\ref{LeeProportionalTheorem}.
}
\end{example}

\section{Conclusion}\label{Conclusion} 

This paper introduces and systematically investigates the class of locally conformal almost generalized $ f$-cosymplectic manifolds. Our work bridges a notable gap in the literature by extending the locally conformal framework from almost cosymplectic to generalized $ f$-cosymplectic geometry.

The main contributions of this paper can be summarized as follows: 
\begin{enumerate}
	\item[(1)] We provide a complete description via a closed Lee form $ \omega$ satisfying the structure equations (3.14), generalizing earlier results in almost cosymplectic and almost $f$-cosymplectic settings (Theorem~\ref{TheoImport1}).
	\item[(2)] In dimensions $5$ and higher, $\omega$  must be proportional to $ \eta$ (Theorems \ref{Lefschetzype} and \ref{LeeProportionalTheorem}). This reveals a fundamental structural constraint distinguishing odd-dimensional locally conformal $f$-cosymplectic geometry from its even-dimensional symplectic counterpart \cite{Vai}.
	\item[(3)]  Examples in dimensions $3$ (Example~\ref{Example1}) and $5$ (Example~\ref{Example3}) demonstrate both the flexibility in low dimensions and the necessity of the rigidity in higher dimensions, confirming the non-triviality of the theory.
\end{enumerate}
The established framework opens several natural directions for future research, including:
\begin{itemize}
	\item  Curvature analysis under the locally conformal $f$-cosymplectic condition. Of particular interest would be the classification of Einstein, $\eta$-Einstein, or constant $\phi$-sectional curvature examples. 	 
	\item  Investigating the topological implications of the Lee-form proportionality in compact settings. For instance, does this rigidity impose restrictions on the Betti numbers, the fundamental group, or the existence of certain characteristic classes?	
	\item  Study of the canonical foliations induced by the Reeb vector field and the Lee form, especially in relation to the function $f$. The interplay between the contact structure, the conformal factor, and the function $f$ may lead to new insights into the geometry of foliations. 
	\item  Characterizing when a locally conformal structure is globally conformal to an almost generalized $f$-cosymplectic manifold. This relates to the cohomology class of the Lee form and may involve conditions on the first Betti number or the existence of global parallel forms.
	\item Extending the framework to other almost contact metric families, such as almost Kenmotsu, almost Sasakian, or paracontact geometries. The function $f$ could also be allowed to depend on additional geometric data, leading to richer interpolation phenomena. 
\end{itemize}
We note that several of these research directions are currently being studied by the authors in a forthcoming paper.

 The locally conformal almost generalized $f$-cosymplectic condition provides a rich geometric setting that generalizes several structures of interest in mathematical physics and foliation theory. For instance, the modulating function $f$ may be interpreted in physical contexts as a variable dissipation coefficient, while the rigidity $\omega =\omega(\xi)\eta$ in higher dimensions imposes strong constraints on the geometry of Reeb flows and their transverse symplectic structures. This framework thus opens new avenues in geometric mechanics, thermodynamics, and the study of foliations with compatible contact‑symplectic data.
 
 Overall, this paper lays a solid foundation for the study of locally conformal almost generalized $f$-cosymplectic geometry, a versatile framework that encapsulates several important geometries while exhibiting new and distinctive rigidity properties. We hope that our results will stimulate further research in differential geometry and related fields where contact and symplectic structures play a fundamental role


\begin{thebibliography}{10} 		
 	\bibitem{aym} N. Aktan, M. Yildirim and C. Murathan, Almost $f$-cosymplectic manifolds, Mediterr. J. Math., 11 (2014), no. 2, 775-787.  
 	\bibitem{bl} D. E. Blair, Riemannian geometry of Contact and symplectic manifolds. Progress in Mathematics, vol. 203, Birkh\"auser Boston, Inc., Boston, MA., 2002.
 	\bibitem{chin} D. Chinea and J. C. Marrero, Conformal changes of almost cosymplectic manifolds, Demonstratio Math. 25 (1992), no. 3, 641-656. 
 	\bibitem{dileo} G. Dileo, On the geometry of almost contact metric manifolds of Kenmotsu type, Differential Geom. Appl. 29 (2011), suppl. 1, S58-S64.
 	\bibitem{GS} A. Ghosh, R. Sharma and J. T.  Cho, Contact metric manifolds with parallel torsion tensor, Ann. Global Anal. Geom. 34 (2008), no. 3, 287-299.   	
 	\bibitem{golb} S. I. Golberg and K. Yano, Integrability of  almost cosymplectic structures, Pacific J. Math.31 (1969), 373 -382. 
 	\bibitem{kim} T. W. Kim and H. K. Pak,  Canonical foliations of certain classes of almost contact metric structures, Acta Math. Sin. (Engl. Ser.), 21 (2005), no. 4, 841-846.  
 	\bibitem{MadMass1} S. H.  Maduna and F. Massamba, Certain class of almost cosymplectic manifolds with K\"ahlerian leaves, Mediterr. J. Math. 20 (2023), no. 3, Paper No. 163, 21 pp.  
 	\bibitem{MadMass2} S. H.  Maduna and F. Massamba,  Locally conformal almost cosymplectic manifolds and nullity distributions, Afr. Mat. 36 (2025), no. 4, Paper No. 169, 17 pp.
 	\bibitem{Mass1} F. Massamba and A. Maloko Mavambou,  A class of locally conformal almost cosymplectic manifolds, Bull. Malays. Math. Sci. Soc., 41 (2018), no. 2, 545-563. 
 	\bibitem{Ol1} Z. Olszak,  Locally conformal almost cosymplectic manifolds, Colloq. Math., 57(1989), no. 1, 73-87.  
 	\bibitem{Ol2} Z. Olszak, On almost cosymplectic manifolds, Kodai Math. J., 4 (1981), no. 2, 239-250. 
 	\bibitem{Vai} I. Vaisman, Locally conformal symplectic manifolds, Internat. J. Math. Math. Sci. 8 (1985), no. 3, 521-536.
 	\end{thebibliography}
 \end{document}